\definecolor {processblue}{cmyk}{0.96,0,0,0}
\newcommand\cyr{%
\renewcommand\rmdefault{wncyr}%
\renewcommand\sfdefault{wncyss}%
\renewcommand\encodingdefault{OT2}%
\normalfont
\selectfont}
\DeclareTextFontCommand{\textcyr}{\cyr}
\DeclareFontFamily{OT1}{rsfs}{}
\DeclareFontShape{OT1}{rsfs}{n}{it}{<-> rsfs10}{}
\DeclareMathAlphabet{\mathscr}{OT1}{rsfs}{n}{it}
\numberwithin{equation}{section}
\newtheorem{theorem}{Theorem}[section]
\newtheorem{cor}[theorem]{Corollary}
\newtheorem{conjecturbe}[theorem]{Conjecture}
\newtheorem{prop}[theorem]{Proposition}
\theoremstyle{definition}
\newtheorem{defn}[theorem]{Definition}
\theoremstyle{remark}
\newtheorem{remark}[theorem]{Remark}
\newtheorem{example}[theorem]{Example}
\newcommand{\Ass}{\operatorname{Ass}}
\newcommand{\Add}{\operatorname{Add}}
\newcommand{\im}{\operatorname{im}}
\newcommand{\tr}{\operatorname{tr}}
\newcommand{\pd}{\operatorname{pd}}
\newcommand{\id}{\operatorname{id}}
\newcommand{\Ext}{\operatorname{Ext}}
\newcommand{\Supp}{\operatorname{Supp}}
\newcommand{\Hom}{\operatorname{Hom}}
\newcommand{\End}{\operatorname{End}}
\newcommand{\p}{\mathfrak{p}}
\newcommand{\m}{\mathfrak{m}}
\newcommand{\w}{\omega}
\begin{document}
\title[Generalized Trace Submodules and Centers of Endomorphism Rings]{Generalized Trace Submodules and Centers of Endomorphism Rings}

\author[Lyle]{Justin Lyle}
\email[Justin Lyle]{jlyle106@gmail.com}
\urladdr{https://jlyle42.github.io/justinlyle/}

\subjclass[2020]{Primary 13C13,16S50; Secondary 13H99}

\keywords{trace submodule, tensor product, endomorphism ring, center}

\begin{abstract}

Let $R$ be a commutative Noetherian local ring and $M$ a finitely generated $R$-module. We introduce a general form of the classically studied trace map that unifies several notions from the literature. We develop a theory around these objects and use it to provide a broad extension of a result of Lindo calculating the center of $\End_R(M)$. As a consequence, we show under mild hypotheses that in dimension $1$, the canonical module of $Z(\End_R(M))$ may be calculated as the trace submodule of $M$ with respect to the canonical module of $R$.
    
\end{abstract}

\maketitle

\section{Introduction}




Let $(R,\m,k)$ be a Noetherian local ring and let $M$ be a finitely generated $R$-module. The map $\Hom_R(M,R) \otimes_R M \to R$ given by $f \otimes x \mapsto f(x)$ is known as the \emph{trace map} of $M$ whose image $\tr_R(M)$ is the corresponding \emph{trace ideal} of $M$. These classically studied objects have received a renewed attention in recent years, and have been applied to the study of several famous and longstanding open conjectures. For instance, \cite{Li172} uses the theory of trace ideals to show the Auslander-Reiten conjecture holds for ideals in an Artinian Gorenstein ring, while \cite{Li17} uses them to show the Huneke-Wiegand conjecture holds for an ideal of $I$ of positive grade if the commutative ring $\End_R(I)$ is Gorenstein. A key point in the theory of trace ideals is a result of Lindo that shows $Z(\End_R(M)) \cong \End_R(\tr_R(M))$ when $M$ is faithful and reflexive \cite[Theorem 3.9]{Li17}.

In this paper, we introduce a broad generalization of the trace map and study it via the natural left module structure of $\End_R(M)$ on $M$. Concretely, we develop a theory around the natural map \[\phi^M_{L,N}:\Hom_R(M,N) \otimes_{\End_R(M)} \Hom_R(L,M) \to \Hom_R(L,N)\] of $\End_R(N)-\End_R(L)$ bimodules and its corresponding image which we denote by $\tr_{L,N}(M)$ (see Definition \ref{tracedef}). The map $\phi^M_{L,N}$ and its image $\tr_{L,N}(M)$ generalize and unify several disparate and well-studied notions from the literature (see Example \ref{unify}). Our main theorem regarding these objects provides a vast extension of \cite[Theorem 3.9]{Li17} mentioned previously. The following is a key special case of our main theorem (Theorem \ref{general}):
\begin{theorem}\label{introthm1}
Suppose $M$ is $N$-reflexive, i.e., the natural map $M \to \Hom_{\End_R(N)}(\Hom_R(M,N),N)$ is an isomorphism, and suppose for all $\p \in \Ass_R(R)$, that one of $R_{\p}$ or $N_{\p}$ is a direct summand of $M_{\p}^{\oplus n_{\p}}$ for some $n_{\p}$. Then $Z(\End_R(M)) \cong \End_{\End_R(N)}(\tr_{R,N}(M))$ as $R$-algebras. 
\end{theorem}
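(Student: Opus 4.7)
The plan is to construct a natural $R$-algebra map $Z(\End_R(M)) \to \End_{\End_R(N)}(\tr_{R,N}(M))$ via tensor--Hom adjunction combined with $N$-reflexivity, and then to establish bijectivity. Injectivity should be essentially formal, while surjectivity requires the associated-prime hypothesis to settle a well-definedness issue.

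First I would set up the adjunction. Writing $L := \Hom_R(M,N)$, the standard tensor--Hom adjunction yields
\[
\Hom_{\End_R(N)}\!\bigl(L \otimes_{\End_R(M)} M,\ N\bigr) \cong \Hom_{\End_R(M)}\!\bigl(M,\ \Hom_{\End_R(N)}(L, N)\bigr).
\]
Under $N$-reflexivity, $\Hom_{\End_R(N)}(L, N) \cong M$, so the right-hand side equals $\End_{\End_R(M)}(M) = Z(\End_R(M))$. Explicitly, $\phi \in Z(\End_R(M))$ corresponds to the map $\tilde\phi: f \otimes m \mapsto f(\phi(m))$, whose image automatically lies in $\tr_{R,N}(M)$. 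Hence the adjunction refines to
\[
Z(\End_R(M)) \cong \Hom_{\End_R(N)}\!\bigl(L \otimes_{\End_R(M)} M,\ \tr_{R,N}(M)\bigr).
\]
Precomposition with the surjection $\phi^M_{R,N}: L \otimes_{\End_R(M)} M \twoheadrightarrow \tr_{R,N}(M)$ then produces an injective $R$-algebra map
\[
\End_{\End_R(N)}(\tr_{R,N}(M)) \hookrightarrow Z(\End_R(M)).
\]

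For surjectivity, given $\phi \in Z(\End_R(M))$, I need the corresponding $\tilde\phi$ to descend through $\phi^M_{R,N}$, equivalently $\ker\phi^M_{R,N} \subseteq \ker\tilde\phi$. This is where the associated-prime hypothesis enters. I would first show that at each $\p \in \Ass_R(R)$ the localization $(\phi^M_{R,N})_\p$ is an isomorphism. In the first case, where $R_\p$ is a summand of $M_\p^{\oplus n_\p}$, writing the splitting as $1 = \sum_i \pi_i(m_i)$ with $\pi_i \in \Hom_R(M,R)_\p$ and $m_i \in M_\p$, a two-sided inverse is $n \mapsto \sum_i (n\pi_i) \otimes m_i$, where $n\pi_i \in \Hom_R(M,N)_\p$ denotes the map $m \mapsto n\cdot\pi_i(m)$. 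The section property is immediate from $\sum \pi_i(m_i) = 1$, and the retraction uses the balance relation $f \cdot h \otimes m = f \otimes h \cdot m$ applied with $h \in \End_R(M)$ the endomorphism $m \mapsto \pi_i(m)\cdot x$. The second case is handled symmetrically using the splitting of $N_\p \hookrightarrow M_\p^{\oplus n_\p}$.

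The hard part will be promoting the resulting local vanishing of $\ker\phi^M_{R,N}$ at each $\p \in \Ass_R(R)$ into global vanishing of $\tilde\phi(\ker\phi^M_{R,N})$. The latter is a submodule of $\tr_{R,N}(M) \subseteq N$ that dies at every $\p \in \Ass_R(R)$; to conclude it is zero, I expect to show that $\Ass_R(\tr_{R,N}(M)) \subseteq \Ass_R(R)$, most likely by exploiting the embedding of $M$ into a power of $N$ coming from $N$-reflexivity together with the local-isomorphism property just established. This global descent step is the principal technical hurdle, and is presumably where the machinery developed earlier in the paper contributes most heavily.
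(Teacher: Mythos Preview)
Your approach coincides with the paper's: both combine Hom--tensor adjunction with $N$-reflexivity to identify $\Hom_{\End_R(N)}\bigl(\Hom_R(M,N)\otimes_{\End_R(M)}M,\,N\bigr)$ with $Z(\End_R(M))$, invoke the local splitting hypothesis to make $\phi^M_{R,N}$ an isomorphism at the relevant associated primes (your explicit inverses are what the paper packages abstractly as Proposition~3.1(2) and (6)), and separately verify that every map in sight lands in $\tr_N(M)$ (the paper isolates this as Proposition~3.1(5), proved by the same mechanism as your step~3). Your ``principal technical hurdle'' dissolves in the paper's formulation: rather than controlling $\Ass_R(\tr_N(M))$, one simply notes that $\tilde\phi(K)\subseteq N$ and that $\Hom_R(K,N)=0$ because $K_\p=0$ for every $\p\in\Ass_R(\Hom_R(R,N))$, so the descent is a one-liner and no further global machinery is required.
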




As a consequence of Theorem \ref{introthm1}, we show that if $R$ is Cohen-Macaulay with canonical module $\w$, and $M$ is maximal Cohen-Macaulay, then under a mild local condition, e.g., $M$ has rank, we have $Z(\End_R(M)) \cong \End_R(\tr_{R,\w}(M))$. If moreover $R$ has dimension $1$, then we also obtain that $\tr_{R,\w}(M)$ is a canonical module for $Z(\End_R(M))$ (see Corollary \ref{canonical}). Through our work, we offer some insights on the famous Huneke-Wiegand conjecture (see Conjectures \ref{hwc} and \ref{hwcalt} and Proposition \ref{equiv}).

\section{Preliminaries}

Throughout, we let $(R,\m,k)$ be a commutative Noetherian local ring, and we let $M$, $N$, and $L$ be finitely generated $R$-modules. We note that $M$ carries a natural left module action of $\End_R(M)$, given by $f \cdot x=f(x)$. Further, $\Hom_R(M,N)$ carries the structure of an $\End_R(N)-\End_R(M)$ bimodule by function composition in the natural way. We recall the $R$-module $M$ is said to have \emph{rank} $r$ if $M_{\p} \cong R_{\p}^{\oplus r}$ for all $\p \in \Ass_R(R)$. We let $\mu_R(M)$ denote the minimal number of generators of $M$ over $R$. We write $(-)^{*}:=\Hom_R(-,R)$ and, if $R$ is Cohen-Macaulay with a canonical module $\w$, we write $(-)^{\vee}:=\Hom_R(-,\w)$. If $E$ is a possibly noncommutative ring, we let $Z(E)$ denote the center of $E$. We note that $Z(\End_R(M))=\End_{\End_R(M)}(M)$. 

The following two definitions provide key notions in our study of generalized trace maps:
\begin{defn}
\
\begin{enumerate}
\item We say $M$ \emph{covariantly generates} $N$ with respect to $L$ if there is an $n$ and a map $p:M^{\oplus n} \to N$ such that $\Hom(L,p)$ is surjective. 
\item We say $M$ \emph{contravariantly generates} $N$ with respect to $L$ if there is an $n$ and a map $q:N \to M^{\oplus n}$ so that $\Hom_R(q,L)$ is surjective. 
\end{enumerate}We simply say that $M$ generates $N$ if $M$ generates $N$ covariantly with respect to $R$, and this is equivalent to the existence of a surjection $p:M^{\oplus n} \to N$ for some $n$. We say $M$ is a \emph{generator with respect to $L$} if $M$ covariantly generates every $R$-module $N$ with respect to $L$, and we refer to $M$ as a \emph{generator} if it is a generator with respect to $R$. When $R$ is local, we note $M$ is a generator if and only if $R$ is a direct summand of $M$.
\end{defn}


\begin{defn}

We let 
\[\epsilon_{M,N}^L:\Hom_R(M,N) \to \Hom_{\End_R(L)}(\Hom_R(L,M),\Hom_R(L,N))\] be the natural map of $\End_R(N)-\End_R(M)$ bimodules given by $\epsilon_{M,N}^L(f)=\Hom_R(L,f)$. We let 
\[\pi_{M,N}^L:\Hom_R(M,N) \to \Hom_{\End_R(L)}(\Hom_R(N,L),\Hom_R(M,L))\] be the natural map of $\End_R(N)-\End_R(M)$ bimodules given by $\pi_{M,N}^L(f)=\Hom_R(f,L)$. We define the following conditions for the ordered pair of $R$-modules $(M,N)$ with respect to the $R$-module $L$:
\begin{enumerate}
\item $(M,N)$ is said to be \emph{covariantly $L$-torsionless} if $\epsilon^L_{M,N}$ is injective. 

\item $(M,N)$ is said to be \emph{contravariantly $L$-torsionless} if $\pi^L_{M,N}$ is injective.

\item $(M,N)$ is said to be \emph{covariantly $L$-reflexive} if $\epsilon^L_{M,N}$ is an isomorphism.

\item $(M,N)$ is said to be \emph{contravariantly $L$-reflexive} if $\pi^L_{M,N}$ is an isomorphism.

\end{enumerate}

When $M=R$, the map $\pi^L_{R,N}$ may be identified, through the natural isomorphisms $\Hom_R(R,N) \cong N$ and $\Hom_R(R,L) \cong L$, with the natural evaluation map $\pi^L_N$. That is to say, 
\[\pi^L_N:N \to \Hom_{\End_R(L)}(\Hom_R(N,L),L)\] is the map of $\End_R(N)-R$ bimodules given by $\pi^L_N(x)=\alpha_x$, where $\alpha_x:\Hom_R(N,L) \to L$ is given by $\alpha_x(g)=g(x)$. We simply say that $N$ is $L$-torsionless (resp. $L$-reflexive) if $\pi^L_N$ is injective (resp. is an isomorphism). We note that $N$ is $R$-torsionless (resp. $R$-reflexive) if and only if it is torsionless (resp. reflexive) is the traditional sense. 

\end{defn}

We let $\operatorname{mod}(R)$ be the category of finitely generated $R$-modules, and we let $\Add_R(M)$ be the full subcategory of $\operatorname{mod}(R)$ consisting of finite direct sums of direct summands of $M$. So $N$ is an object of $\Add_R(M)$ if and only if $N$ is a summand of $M^{\oplus n}$ for some $n$.

\begin{remark}
We remark that the maps $\epsilon^L_{M,N}$ and $\pi^L_{M,N}$ are in a sense more natural than the canonical maps $\iota^L_{M,N}:\Hom_R(M,N) \to \Hom_R(\Hom_R(L,M),\Hom_R(L,N)$ and $\kappa^L_{M,N}:\Hom_R(M,N) \to \Hom_R(\Hom_R(N,L),\Hom_R(M,L))$ studied by several in the literature (see e.g. \cite[Definition 3.11]{LT23}). Indeed, the maps $\iota^L_{M,N}$ and $\kappa^L_{M,N}$ factor through $\epsilon^L_{M,N}$ and $\pi^L_{M,N}$, respectively, to begin with, and images of the latter are meaningfully closer to $\Hom_R(M,N)$ in most circumstances. For example, if $R$ is a domain and the rank of $L$ is positive, then the $R$-modules $\Hom_R(M,N)$, $\Hom_{\End_R(L)}(\Hom_R(L,M),\Hom_R(L,N)$, and $\Hom_{\End_R(L)}(\Hom_R(N,L),\Hom_R(M,L))$ all have the same rank by Morita considerations, but the ranks of the modules $\Hom_R(\Hom_R(L,M),\Hom_R(L,N))$ and $\Hom_R(\Hom_R(N,L),\Hom_R(M,L))$ will agree with that of $\Hom_R(M,N)$ if and only if the rank of $L$ is $1$. In particular, one cannot hope for $\iota^L_{M,N}$ or $\kappa^L_{M,N}$ to be an isomorphism in such a situation. 
\end{remark}

The following definition gives the main object of consideration for this work.

\begin{defn}\label{tracedef}

We let $\phi_{L,N}^M:\Hom_R(M,N) \otimes_{\End_R(M)} \Hom_R(L,M) \to \Hom_R(L,N)$ be the natural map of $\End_R(N)-\End_R(L)$ bimodules given on elementary tensors by $\phi_{L,N}^M(f \otimes g)=f \circ g$, and we refer to it as the \emph{trace map of $M$ with respect to the pair $(L,N)$}. We let $\tr_{L,N}(M)$ denote the image of $\phi^M_{L,N}$ and refer to it as the \emph{trace submodule of $\Hom_R(L,N)$ associated to $M$}.

\end{defn}

We give context to these objects through some remarks and examples.

\begin{remark}\label{trdesc}

We note that $\tr_{L,N}(M)$ can be described as 
\[\tr_{L,N}(M)=\{f \in \Hom_R(L,N) \mid f \mbox{ factors through } M^{\oplus n} \mbox{ for some } n\}.\]

\end{remark}

\begin{remark}

The map $\phi^M_{L,N}$ is the special case of the familiar Yoneda $\Ext$-map 
\[\Ext^p_R(M,N) \otimes_{\End_R(M)} \Ext^q_R(L,M) \to \Ext^{p+q}_{R}(L,N)\] (see e.g. \cite[Section 2]{Oo64} for the definition) with $p$ and $q$ taken to be $0$. The behavior of $\phi^M_{L,N}$ is much more predictable when compared with Yoneda maps for higher values of $p$ and $q$, due to the nature of higher extensions. 


\end{remark}

The map $\phi^M_{L,N}$ unifies several objects of consideration from the literature. The following are some natural contexts in which $\phi^M_{L,N}$ and $\tr_{L,N}(M)$ have been studied:

\begin{example}\label{unify}
\
\begin{enumerate}

\item[$(1)$] When $L=R$, the map $\phi^M_{R,N}$ can be naturally identified, through the canonical isomorphisms $\Hom_R(R,L) \cong L$ and $\Hom_R(R,N) \cong N$, with the map $\phi^M_N:\Hom_R(M,N) \otimes_{\End_R(M)} M \to N$ given on elementary tensors by $\phi^M_N(f \otimes x)=f(x)$. We let $\tr_N(M)$ denote the image of $\phi^M_N$, which is an $\End_R(N)-R$ subbimodule of $N$.

\item[$(2)$] When $L=N=R$, the map $\phi^M_R:M^{*} \otimes_{\End_R(M)} M \to R$ is the usual trace map from the literature whose image $\tr_R(M)$ is the corresponding \emph{trace ideal} of $M$; see e.g. \cite{Li17,Fa20}.

\item[$(3)$] When $M=R$, the map $\phi^M_{L,N}$ may be identified, similar to Part $(1)$, with the map $\theta_{L,N}:N \otimes_R L^{*} \to \Hom_R(L,N)$ given on elementary tensors by $x \otimes f \mapsto \alpha_{x,f}$, where $\alpha_{x,f}:N \to N$ is given by $\alpha_{x,f}(y)=f(y)x$. This map has been studied extensively in the context of stable module theory as developed by Auslander-Bridger \cite{AB69}. 

\item[$(4)$] If $R$ is Cohen-Macaulay with canonical module $\w$, the case where $L=R$ and $N=\w$ has been studied in the context of generalizations of the well-known Huneke-Wiegand conjecture (Conjecture \ref{hwc}). See \cite[Conjecture 1.3]{GT15}, \cite[Remark 2.5]{GT17}, and \cite[Question 4.1]{CG19} (cf. the discussion after Conjecture \ref{hwc} below).

\item[$(5)$] If $C$ is a \emph{semidualizing module}, i.e., $\Ext^i_R(C,C)=0$ for all $i>0$ and the natural homothety map $R \to \End_R(C)$ is an isomorphism, then the condition that $\phi_M^C$ be an isomorphism is part of the requirements for $M$ to belong to the Bass class of $C$; see \cite[Definition 3.1.4]{KS09} or \cite[1.8]{TW10}.

\end{enumerate}

\end{example}

The following is well-known, but we provide a short proof due to lack of a suitable reference.
\begin{prop}\label{faithref}
If $M$ is torsionless, then $M$ is faithful if and only if $R_{\p} \in \Add_{R_{\p}}(M_{\p})$ for all $\p \in \Ass_R(R)$.     
\end{prop}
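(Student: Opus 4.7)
The plan is to handle the two implications separately, reducing each to a local argument at $\p \in \Ass_R(R)$. The two ingredients I rely on throughout are: for a finitely generated $R$-module $M$, one has $\ann_R(M)R_{\p} = \ann_{R_{\p}}(M_{\p})$; and the natural map $R \hookrightarrow \prod_{\p \in \Ass_R(R)} R_{\p}$ is injective (the standard argument being that a nonzero element of the kernel would produce a principal submodule whose associated prime must lie in $\Ass_R(R)$, giving a contradiction).

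For the direction $(\Leftarrow)$ I first note that torsionlessness is not even needed. Suppose $R_{\p} \in \Add_{R_{\p}}(M_{\p})$ for every $\p \in \Ass_R(R)$. Any $r \in \ann_R(M)$ localizes to an element of $\ann_{R_{\p}}(M_{\p})$; since $R_{\p}$ is a direct summand of $M_{\p}^{\oplus n_{\p}}$ this element kills $R_{\p}$, hence vanishes in $R_{\p}$. Running over all $\p \in \Ass_R(R)$ and invoking the injectivity of $R \hookrightarrow \prod_{\p \in \Ass_R(R)} R_{\p}$ forces $r = 0$, so $M$ is faithful.

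For $(\Rightarrow)$, fix $\p \in \Ass_R(R)$ and localize. Torsionlessness of $M$ gives an embedding $M_{\p} \hookrightarrow R_{\p}^{\oplus n}$ for some $n$, while faithfulness of $M$ passes (via the annihilator identity above) to faithfulness of $M_{\p}$ over $R_{\p}$. Since $\p R_{\p} \in \Ass_{R_{\p}}(R_{\p})$, pick $x \in R_{\p}$ with $\ann_{R_{\p}}(x) = \p R_{\p}$. Faithfulness of $M_{\p}$ forces $x M_{\p} \neq 0$, so some $m \in M_{\p}$ has image $(r_1,\ldots,r_n)$ in $R_{\p}^{\oplus n}$ satisfying $x r_i \neq 0$ for some $i$. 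Then $r_i \notin \p R_{\p}$ is a unit, so the composition of the inclusion with the $i$-th coordinate projection is a surjection $M_{\p} \twoheadrightarrow R_{\p}$ which splits, placing $R_{\p}$ as a direct summand of $M_{\p}$ and in particular in $\Add_{R_{\p}}(M_{\p})$.

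I do not expect any serious obstacles here: the argument is a classical trick of using a socle-type element of $R_{\p}$ to detect a unit coordinate in a torsionless localization, and the only point requiring care is the interaction of annihilators with localization, which is where the finite generation of $M$ is used (in general one has only the containment $\ann_R(M)R_{\p} \subseteq \ann_{R_{\p}}(M_{\p})$).
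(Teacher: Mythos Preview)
Your proof is correct, but the forward implication $(\Rightarrow)$ proceeds quite differently from the paper. The paper builds an injection $j:R \hookrightarrow M^{\oplus r}$ from faithfulness, composes it with the torsionless embedding $i^{\oplus r}:M^{\oplus r} \hookrightarrow R^{\oplus nr}$, and observes that the cokernel $C$ of $i^{\oplus r}\circ j$ has $\pd_R(C)\le 1$; at $\p\in\Ass_R(R)$ the Auslander--Buchsbaum formula (with $\depth R_{\p}=0$) forces $C_{\p}$ free, so $j$ splits locally. Your argument instead works entirely inside $R_{\p}$: you pick a nonzero $x$ with $\ann_{R_{\p}}(x)=\p R_{\p}$, use faithfulness to find $m\in M_{\p}$ with $xm\neq 0$, and read off from the torsionless embedding a coordinate of $m$ that must be a unit, producing a split surjection $M_{\p}\twoheadrightarrow R_{\p}$ directly. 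Your route is more elementary---no projective dimension or Auslander--Buchsbaum---and in fact yields the slightly sharper conclusion that $R_{\p}$ is a summand of $M_{\p}$ itself rather than of a power. The paper's route is more structural and avoids element chasing, at the cost of invoking heavier homological input. Your remark that the easy direction does not need torsionlessness is also correct and matches the paper's treatment.
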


\begin{proof}
It's clear that if $R_{\p} \in \Add_{R_{\p}}(M_{\p})$ for all $\p \in \Ass_R(R)$ that $M$ must be faithful. For the converse, suppose $M$ is faithful so that there is an injection $j:R \to M^{\oplus r}$ for some $r$. As $M$ is torsionless, there is an injection $M \xrightarrow{i} R^{\oplus n}$ for some $n$. Then there is an exact sequence of the form $0 \rightarrow R \xrightarrow{i^{\oplus r} \circ j} R^{\oplus nr} \to C \to 0$, so $\pd_R(C) \le 1$. In particular, $C_{\p}$ has finite projective dimension for all $\p \in \Ass(R)$, but as $R_{\p}$ has depth zero, the Auslander-Buchsbaum formula forces $C_{\p}$ to be a free $R_{\p}$-module. In particular $i^{\oplus r} \circ j$ is a split injection locally at every $\p \in \Ass(R)$. Then so is $j$, and so $R_{\p} \in \Add_{R_{\p}}(M_{\p})$ for all $\p \in \Ass(R)$, as desired.
\end{proof}
\section{Main Results}
We begin this section by recording some of the key properties of the map $\phi^M_{L,N}$ and its corresponding trace submodules, most of which will be needed for the proof of our main theorem.

\begin{prop}\label{tracetheory}
For any $R$-modules $A,B,L,M,N$, we have
\begin{enumerate}
    \item[$(1)$] $\tr_{L,N}(A \oplus B)=\tr_{L,N}(A)+\tr_{L,N}(B)$.
    \item[$(2)$] If $L$ or $N$ is in $\Add_R(M)$, then $\phi^M_{L,N}$ is an isomorphism. In particular, if $M$ is a generator then $\phi^M_N$ is an isomorphism.
    \item[$(3)$] If $A$ generates $B$ covariantly with respect to $L$ or contravariantly with respect to $N$, then $\tr_{L,N}(B) \subseteq \tr_{L,N}(A)$.
    \item[$(4)$] 
    If $M$ generates $N$ covariantly with respect to $L$ or if $M$ generates $L$ contravariantly with respect to $N$ then $\phi_{L,N}^M$ is surjective.
    \item[$(5)$] Let $i:\tr_{L,N}(M) \to \Hom_R(L,N)$ denote the natural inclusion. If the pair $(M,N)$ is covariantly reflexive with respect to $L$ (resp. the pair $(L,M)$ is contravariantly reflexive with respect to $N$), then $\Hom_{\End_R(L)}(\tr_{L,N}(M),i)$ (resp. $\Hom_{\End_R(N)}(\tr_{L,N}(M),i)$) is an isomorphism. 
    
    \item[$(6)$] If $S$ is a flat $R$-algebra, then $\phi^M_{L,N} \otimes_R S$ may be identified with $\phi^{M \otimes_R S}_{L \otimes_R S,N \otimes_R S}$, and we have $\tr^M_{L,N} \otimes_R S=\tr^{M \otimes_R S}_{L \otimes_R S,N \otimes_R S}$ upon identifying $\Hom_R(L,N) \otimes_R S$ with $\Hom_S(L \otimes_R S,N \otimes_R S)$. In particular, $\phi^M_{L,N}$ and $\tr^M_{L,N}$ respect localization and completion over $R$.

\end{enumerate}
    
\end{prop}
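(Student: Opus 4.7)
The plan is to handle each part in the listed order, since (1)--(4) and (6) are essentially formal consequences of Remark \ref{trdesc} and standard tensor-hom manipulations, while (5) is the only substantive step and the main obstacle. For (1), I would appeal to Remark \ref{trdesc}: a map $L\to N$ factors through some $(A\oplus B)^{\oplus k}$ if and only if it is the sum of a map factoring through a power of $A$ and one factoring through a power of $B$, using the summand inclusion $A^{\oplus m}\oplus B^{\oplus n}\hookrightarrow (A\oplus B)^{\oplus m+n}$ and the obvious projections. For (2), the core computation is that $\phi^M_{L,M^{\oplus n}}$ is an isomorphism: the right $\End_R(M)$-module identification $\Hom_R(M,M^{\oplus n})\cong \End_R(M)^{\oplus n}$ collapses the tensor product to $\Hom_R(L,M)^{\oplus n}\cong \Hom_R(L,M^{\oplus n})$. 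Since $\phi^M_{L,-}$ is additive in its second argument, this passes to summands, giving the case $N\in \Add_R(M)$; the case $L\in \Add_R(M)$ is symmetric via the left $\End_R(M)$-module identification $\Hom_R(M^{\oplus n},M)\cong \End_R(M)^{\oplus n}$.

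For (3), given $p:A^{\oplus n}\to B$ with $\Hom_R(L,p)$ surjective and $f=h\circ g\in \tr_{L,N}(B)$ with $g:L\to B^{\oplus m}$, $h:B^{\oplus m}\to N$, the surjectivity of $\Hom_R(L,p^{\oplus m})$ lets me lift $g$ to some $g':L\to A^{\oplus nm}$, so $f=h\circ p^{\oplus m}\circ g'$ factors through $A^{\oplus nm}$; the contravariant case is symmetric, lifting each component of $h$ through $q$. Then (4) is an immediate corollary: the identities witness $\tr_{L,N}(N)=\Hom_R(L,N)=\tr_{L,N}(L)$, so applying (3) with $A=M$ and $B=N$ (respectively $B=L$) forces $\tr_{L,N}(M)=\Hom_R(L,N)$.

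The main obstacle is (5). Since $i$ is injective, $\Hom_{\End_R(L)}(\tr_{L,N}(M),i)$ is automatically injective, so the content is surjectivity: every right $\End_R(L)$-linear $\beta:\tr_{L,N}(M)\to \Hom_R(L,N)$ must satisfy $\beta(\tr_{L,N}(M))\subseteq \tr_{L,N}(M)$. Given such a $\beta$ and any $f\in \Hom_R(M,N)$, I would define $\gamma_f:\Hom_R(L,M)\to \Hom_R(L,N)$ by $\gamma_f(g):=\beta(f\circ g)$; a short check using the right $\End_R(L)$-linearity of $\beta$ and associativity of composition shows $\gamma_f$ is right $\End_R(L)$-linear. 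The covariant $L$-reflexivity of $(M,N)$ then yields a unique $\tilde f\in \Hom_R(M,N)$ with $\epsilon^L_{M,N}(\tilde f)=\gamma_f$, i.e., $\beta(f\circ g)=\tilde f\circ g\in \tr_{L,N}(M)$. Writing an arbitrary element of $\tr_{L,N}(M)$ as a finite sum $\sum_i f_i\circ g_i$ and using additivity of $\beta$ closes the argument. The contravariant case mirrors this: fix $g\in \Hom_R(L,M)$, check that $\gamma_g(f):=\beta(f\circ g)$ is a left $\End_R(N)$-linear map $\Hom_R(M,N)\to \Hom_R(L,N)$, and apply contravariant $N$-reflexivity of $(L,M)$ via $\pi^N_{L,M}$ to realize it as postcomposition with some $\tilde g\in \Hom_R(L,M)$, giving $\beta(f\circ g)=f\circ \tilde g\in \tr_{L,N}(M)$.

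For (6), I would invoke the standard base change isomorphism $\Hom_R(A,B)\otimes_R S\cong \Hom_S(A\otimes_R S,B\otimes_R S)$, valid for $A$ finitely presented and $S$ flat (both automatic here since $R$ is Noetherian and $A$ finitely generated), giving in particular $\End_R(M)\otimes_R S\cong \End_S(M\otimes_R S)$. Combined with the standard flat base change $(X\otimes_{\End_R(M)}Y)\otimes_R S\cong (X\otimes_R S)\otimes_{\End_R(M)\otimes_R S}(Y\otimes_R S)$, obtained by applying the exact functor $-\otimes_R S$ to the defining presentation of the tensor product, these isomorphisms identify source and target of $\phi^M_{L,N}\otimes_R S$ with those of $\phi^{M\otimes_R S}_{L\otimes_R S,N\otimes_R S}$; a check on elementary tensors confirms the maps agree. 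The trace submodule statement then follows since flatness of $S$ preserves images, and the localization and completion claims are special cases.
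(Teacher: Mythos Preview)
Your proposal is correct and follows essentially the same approach as the paper in every part. The only cosmetic differences are in (2), where the paper writes out explicit split-injection/retraction diagrams while you invoke additivity in each variable to pass to summands, and in (5), where the paper first fixes a generating set $t_1,\dots,t_n$ of $\Hom_R(M,N)$ as a right $\End_R(M)$-module and then runs your $\gamma_f$ construction only for $f=t_i$; your formulation for arbitrary $f$ is slightly cleaner but the underlying idea is identical.
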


\begin{proof}

Note $(1)$ is immediate given Remark \ref{trdesc}. For $(2)$, if $L \in \Add_R(M)$, then there is a split injection $i:L \to M^{\oplus n}$ for some $n$, with retraction $p:M^{\oplus n} \to L$. There is a commutative diagram:
\[\begin{tikzcd}
	{\Hom_R(M,N) \otimes_{\End_R(M)} \Hom_R(M^{\oplus n},M)} && {\Hom_R(M^{\oplus n},N)} \\
	{\Hom_R(M,N) \otimes_{\End_R(M)} \End_R(M)^{\oplus n}} && {\Hom_R(M,N)^{\oplus n}}
	\arrow["{\phi^M_{M^{\oplus n},N}}", from=1-1, to=1-3]
	\arrow["\eta", from=2-1, to=2-3]
	\arrow["{a}"', from=1-1, to=2-1]
	\arrow["b"', from=1-3, to=2-3]
\end{tikzcd}\]
where $a,b$ and $\eta$ are the natural isomorphisms. In particular, $\phi^M_{M^{\oplus n},N}$ is an isomorphism. But there is also a commutative diagram
\[\begin{tikzcd}
	{\Hom_R(M,N) \otimes_{\End_R(M)} \Hom_R(L,M)} && {\Hom_R(L,N)} \\
	{\Hom_R(M,N) \otimes_{\End_R(M)} \Hom_R(M^{\oplus n},M)} && {\Hom_R(M^{\oplus n},N)} \\
	{\Hom_R(M,N) \otimes_{\End_R(M)} \Hom_R(L,M)} && {\Hom_R(L,N)}
	\arrow["{\phi^M_{L,N}}", from=1-1, to=1-3]
	\arrow["{\phi^M_{M^{\oplus n},N}}", from=2-1, to=2-3]
	\arrow["{\Hom_R(M,N) \otimes_{\End_R(M)} \Hom_R(i,M)}"', from=1-1, to=2-1]
	\arrow["{\Hom_R(i,N)}", from=1-3, to=2-3]
	\arrow["{\phi^M_{L,N}}", from=3-1, to=3-3]
	\arrow["{\Hom_R(p,N)}", from=2-3, to=3-3]
	\arrow["{\Hom_R(M,N) \otimes_{\End_R(M)} \Hom_R(p,M)}"', from=2-1, to=3-1]
\end{tikzcd}\]
As $i$ is a split injection the commutativity of the top square forces $\phi^M_{L,N}$ to be injective, while that of the bottom square forces it to be surjective, so it is an isomorphism. The case where $N \in \Add_R(M)$ instead follows from a nearly identical argument.

For $(3)$, suppose $A$ generates $B$ covariantly with respect to $L$, so there is a map $p:A^{\oplus m} \to B$ such that $\Hom_R(L,p)$ is a surjection. Suppose we have $g \in \tr_{L,N}(B)$, so that there is a commutative diagram  
\[\begin{tikzcd}
	L && N \\
	& {B^{\oplus n}}
	\arrow["t"', from=1-1, to=2-2]
	\arrow["s"', from=2-2, to=1-3]
	\arrow["g", from=1-1, to=1-3]
\end{tikzcd}\]
for some $n$. Since $\Hom_R(L,p)$ is surjective, so is $\Hom_R(L,p^{\oplus n})$, and thus there is a map $q \in \Hom_R(L,A^{\oplus mn})$ so that $p^{\oplus n} \circ q=t$. Then there is a commutative diagram
\[\begin{tikzcd}
	L && N \\
	& {A^{\oplus mn}}
	\arrow["q"', from=1-1, to=2-2]
	\arrow["{s \circ p^{\oplus n}}"', from=2-2, to=1-3]
	\arrow["g", from=1-1, to=1-3]
\end{tikzcd}\]
which shows $g \in \tr_{L,N}(A)$. The case where $A$ generates $B$ contravariantly with respect to $N$ follows similarly.

For $(4)$, from part (3) we have either $\tr_{L,N}(L) \subseteq \tr_{L,N}(M)$ when $M$ generates $N$ covariantly with respect to $L$ or $\tr_{L,N}(N) \subseteq \tr_{L,N}(M)$ when $M$ generates $L$ contravariantly with respect to $N$. But from part $(2)$, we have $\tr_{L,N}(L)=\tr_{L,N}(N)=\Hom_R(L,N)$, so $\phi^M_{L,N}$ is surjective.

For $(5)$, suppose the pair $(M,N)$ is covariantly reflexive with respect to $L$. Let $t_1,\dots,t_n$ be a minimal generating set for $\Hom_R(M,N)$ as a right $\End_R(M)$-module and let $p:(\End_R(M))^{\oplus n} \to \Hom_R(M,N)$ be the surjection mapping each standard basis element $e_i$ to $t_i$. For each $i=1,\dots,n$, let $j_i:\Hom_R(L,M) \to (\End_R(M))^{\oplus n} \otimes_{\End_R(M)} \Hom_R(L,M)$ be given by $j_i(a)=e_i \otimes a$. Now pick $f \in \Hom_{\End_R(L)}(\tr_{L,N}(M),\Hom_R(L,N))$. We claim $f$ factors through the map $i$, equivalently, that $\im f \subseteq \tr_{L,N}(M)$. For this, it suffices to show the images under $f$ of the generators $f_i \circ a$ for $a \in \Hom_R(L,M)$ are contained in $\tr_{L,N}(M)$. For each $i=1,\dots,n$, set $w_i=f \circ \phi^M_{L,N} \circ (p \otimes \id_{\Hom_R(L,M)}) \circ j_i$. Then each $w_i \in \Hom_{\End_R(L)}(\Hom_R(L,M),\Hom_R(L,N))$, and as $(M,N)$ is covariantly reflexive with respect to $L$ there are unique maps $s_i \in \Hom_R(M,N)$ so that $w_i=\Hom_R(L,s_i)$. Now given a generator $f_i \circ a$ of $\tr_{L,N}(M)$, we have $f_i \circ a=\phi^M_{L,N}((p \otimes \id_{\Hom_R(L,M)})(j_i(a)))$, so $f(f_i \circ a)=w_i(a)=\Hom_R(L,s_i)(a)=s_i \circ a \in \tr_{L,N}(M)$, and the claim follows. The case where $(L,M)$ is contravariantly reflexive with respect to $N$ instead follows from a similar argument.

The claims of $(6)$ follow from the commutative diagram
\[\begin{tikzcd}[column sep=3em]
	{(\Hom_R(M,N) \otimes_{\End_R(M)} \Hom_R(L,M)) \otimes_R S} & {\Hom_R(L,N) \otimes_R S} \\
	{\Hom_S(M \otimes_R S,N \otimes_R S) \otimes_{\End_S(M \otimes_R S)} \Hom_S(L \otimes_R S,M \otimes_R S)} & {\Hom_S(L \otimes_R S,N \otimes_R S)}
	\arrow["{\phi^M_{L,N} \otimes_R S}", from=1-1, to=1-2]
	\arrow[from=1-1, to=2-1]
	\arrow[from=1-2, to=2-2]
	\arrow["{\phi^{M \otimes_R S}_{L \otimes_R S,N \otimes_R S}}", from=2-1, to=2-2]
\end{tikzcd}\]

whose vertical arrows are the natural isomorphisms owed to the flatness of $S$.

\end{proof}

\begin{remark}
Proposition \ref{tracetheory} recovers several well-known facts about trace ideals, and clarifies that many of the familiar properties of trace ideals are owed to the freeness of $L$ and/or $N$ in considering $\tr_{L,N}(M)$.
    
\end{remark}

We now present the main theorem of this section, from which we will derive several Corollaries, including Theorem \ref{introthm1}. The statement of this theorem carries some technicality, but allows for a great deal of flexibility in hypotheses. 

\begin{theorem}\label{general}

Suppose, for all $\p \in \Supp_R(N) \cap \Ass_R(L)$, that one of $L_{\p}$ or $N_{\p}$ is in $\Add_{R_{\p}}(M_{\p})$. Then:
\begin{enumerate}
\item[$(1)$] If the pair $(M,N)$ is covariantly reflexive with respect to $L$, then there is an isomorphism of rings $\End_{\End_R(L)}(\tr_{L,N}(M)) \cong \End_{\End_R(M)}(\Hom_R(M,N))$.
\item[$(2)$] If the pair $(L,M)$ is contravariantly reflexive with respect to $N$, then there is an isomorphism of rings $\End_{\End_R(N)}(\tr_{L,N}(M)) \cong \End_{\End_R(M)}(\Hom_R(L,M))$

\end{enumerate}

\end{theorem}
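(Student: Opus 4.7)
For part $(1)$, my plan is to construct a canonical injective ring homomorphism $\End_{\End_R(L)}(\tr_{L,N}(M)) \hookrightarrow \End_{\End_R(M)}(\Hom_R(M,N))$ by composing a sequence of natural identifications, and then to argue that the localization hypothesis forces this map to be an isomorphism. By Proposition \ref{tracetheory}(5), the covariant $L$-reflexivity of $(M,N)$ identifies $\End_{\End_R(L)}(\tr_{L,N}(M))$ with $\Hom_{\End_R(L)}(\tr_{L,N}(M),\Hom_R(L,N))$ via post-composition with the inclusion $i$. Setting $X:=\Hom_R(M,N)\otimes_{\End_R(M)}\Hom_R(L,M)$, pre-composition with the surjection $\phi^M_{L,N}:X\twoheadrightarrow \tr_{L,N}(M)$ then gives an injection into $\Hom_{\End_R(L)}(X,\Hom_R(L,N))$. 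Hom--tensor adjunction rewrites the latter as $\Hom_{\End_R(M)}(\Hom_R(M,N),\Hom_{\End_R(L)}(\Hom_R(L,M),\Hom_R(L,N)))$, and the covariant $L$-reflexivity of $(M,N)$ collapses the inner Hom to $\Hom_R(M,N)$ via $\epsilon^L_{M,N}$. The composite is the desired injective map of abelian groups; that it is a ring homomorphism is a short direct check using the explicit correspondence $f\mapsto \Phi_f$ with $\Phi_f(h)\circ g=f(h\circ g)$.

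For surjectivity, given $\Phi \in \End_{\End_R(M)}(\Hom_R(M,N))$, the corresponding $E_L$-map through the chain is $\Psi:=\phi^M_{L,N}\circ(\Phi\otimes_{\End_R(M)}\id_{\Hom_R(L,M)}):X\to \Hom_R(L,N)$. Tracing back through the identifications, $\Phi$ lies in the image of the injection precisely when $\Psi$ factors through $\tr_{L,N}(M)$, equivalently when $\Psi|_K=0$ for $K:=\ker \phi^M_{L,N}$. By Proposition \ref{tracetheory}(2) and (6), $K_{\p}=0$ at every prime $\p$ where one of $L_{\p}$ or $N_{\p}$ lies in $\Add_{R_{\p}}(M_{\p})$; the localization hypothesis delivers $K_{\p}=0$ for all $\p\in\Supp_R(N)\cap\Ass_R(L)$. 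The remaining work is to promote this to the global assertion $\Psi(K)=0$, which is an associated-prime argument exploiting the inclusion $\Psi(K)\subseteq\tr_{L,N}(M)\subseteq\Hom_R(L,N)$.

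Part $(2)$ follows in parallel. The plan is to use the contravariant clause of Proposition \ref{tracetheory}(5) on the left $\End_R(N)$-structure of $\tr_{L,N}(M)$, apply Hom--tensor adjunction for the other bimodule structure on $X$, and invoke the contravariant $N$-reflexivity of $(L,M)$ via $\pi^N_{L,M}$ to identify $\Hom_{\End_R(N)}(X,\Hom_R(L,N))$ with $\End_{\End_R(M)}(\Hom_R(L,M))$; the surjectivity argument is identical, with the roles of the two tensor factors swapped. I expect the principal obstacle in each part to be the surjectivity step — converting the local vanishing $K_{\p}=0$ at the hypothesized primes into the global identity $\Psi(K)=0$ — and this is where the particular choice of the locus $\Supp_R(N)\cap\Ass_R(L)$ must be shown to control the relevant associated primes.
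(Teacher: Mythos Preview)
Your plan is correct and essentially identical to the paper's proof: the paper also takes the short exact sequence $0 \to K \to X \xrightarrow{\phi^M_{L,N}} \tr_{L,N}(M) \to 0$, applies $\Hom_{\End_R(L)}(-,\Hom_R(L,N))$, uses Hom--tensor adjunction together with the reflexivity hypothesis to reach $\End_{\End_R(M)}(\Hom_R(M,N))$, and invokes Proposition~\ref{tracetheory}(5) exactly as you do. The ``principal obstacle'' you flag is dispatched in one stroke by the paper via the standard identity $\Ass_R(\Hom_R(L,N))=\Supp_R(L)\cap\Ass_R(N)$: since $K_\p=0$ at all such $\p$, one gets $\Hom_R(K,\Hom_R(L,N))=0$, hence $\Hom_{\End_R(L)}(K,\Hom_R(L,N))=0$ as it is an $R$-submodule, which kills every $\Psi|_K$ at once rather than one $\Phi$ at a time.
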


\begin{proof}

There is a short exact sequence of $\End_R(N)-\End_R(L)$ bimodules
\[\epsilon: 0 \rightarrow K \rightarrow \Hom_R(M,N) \otimes_{\End_R(M)} \Hom_R(L,M) \xrightarrow{\phi^M_{L,N}} \tr_{L,N}(M) \rightarrow 0.\] By Proposition \ref{tracetheory} (2) and (6), the hypotheses force $\phi^M_{L,N}$ to be an isomorphism locally at every $\p \in \Supp_R(L) \cap \Ass_R(N)=\Ass_R(\Hom_R(L,N))$. In particular, $K_{\p}=0$ for all $\p \in \Ass_R(\Hom_R(L,N))$, and it follows that $\Hom_R(K,\Hom_R(L,N))=0$. As the $R$-modules $\Hom_{\End_R(N)}(K,\Hom_R(L,N))$ and $\Hom_{\End_R(L)}(K,\Hom_R(M,N))$ are submodules of $\Hom_R(K,\Hom_R(L,N))=0$, it follows they are $0$ as well. 

For $(1)$, we apply $\Hom_{\End_R(L)}(-,\Hom_R(L,N))$ to $\epsilon$ to see that $\Hom_{\End_R(L)}(\phi^M_{L,N},\Hom_R(L,N))$ gives an isomorphism
$\Hom_{\End_R(L)}(\tr_{L,N}(M),\Hom_R(L,N)) \to \Hom_{\End_R(L)}(\Hom_R(M,N) \otimes_{\End_R(M)} \Hom_R(L,M),\Hom_R(L,N))$. By Hom-tensor adjointness, the right hand term is naturally isomorphic to
\[\Hom_{\End_R(M)}(\Hom_R(M,N),\Hom_{\End_R(L)}(\Hom_R(L,M),\Hom_R(L,N)).\]
But as the pair $(M,N)$ is covariantly reflexive with respect to $L$, this term is naturally isomorphic to $\End_{\End_R(M)}(\Hom_R(M,N))$. The claim then follows from Proposition \ref{tracetheory} (5), noting the composition $\End_{\End_R(L)}(\tr_{L,N}(M)) \to \End_{\End_R(M)}(\Hom_R(M,N))$ is a map of rings.

For (2), we follow a similar approach; applying $\Hom_{\End_R(N)}(-,\Hom_R(L,N))$ we observe that $\Hom_{\End_R(N)}(\phi^M_{L,N},\Hom_R(L,N))$ gives an isomorphism $\Hom_{\End_R(N)}(\tr_{L,N}(M),\Hom_R(L,N)) \to \Hom_{\End_R(N)}(\Hom_R(M,N) \otimes_{\End_R(M)} \Hom_R(L,M),\Hom_R(L,N))$
and Hom-tensor adjointness shows the right hand term is isomorphic to 
\[\Hom_{\End_R(M)}(\Hom_R(L,M),\Hom_{\End_R(N)}(\Hom_R(M,N),\Hom_R(L,N)).\]
Since the pair $(L,M)$ is contravariantly reflexive with respect to, this term is naturally isomorphic to $\End_{\End_R(M)}(\Hom_R(L,M))$, and we have the claim.

\end{proof}

\begin{cor}\label{lessgeneral}

Suppose, for all $\p \in \Supp_R(N) \cap \Ass_R(R)$, that one of $R_{\p}$ or $N_{\p}$ is in $\Add_{R_{\p}}(M_{\p})$. If $M$ is reflexive with respect to $N$, then there is a ring isomorphism $\End_{\End_R(N)}(\tr_{N}(M)) \cong Z(\End_R(M))$.

\end{cor}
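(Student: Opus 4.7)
The plan is to derive the corollary as the specialization $L = R$ of Theorem \ref{general}(2); essentially all the work lies in matching up hypotheses and translating the conclusion through standard identifications. With $L = R$ we have $\Ass_R(L) = \Ass_R(R)$, so the local condition in Theorem \ref{general}(2) becomes exactly the condition assumed here. For the reflexivity input, I would use the canonical identifications $\Hom_R(R,M) \cong M$ and $\Hom_R(R,N) \cong N$ to identify $\pi^N_{R,M}$ with the evaluation map $\pi^N_M \colon M \to \Hom_{\End_R(N)}(\Hom_R(M,N), N)$. Under this identification, the hypothesis that $M$ is reflexive with respect to $N$ in the sense of the preliminaries is precisely contravariant $N$-reflexivity of the pair $(R,M)$, which is what Theorem \ref{general}(2) requires.

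Applying that theorem then yields a ring isomorphism
\[
\End_{\End_R(N)}(\tr_{R,N}(M)) \cong \End_{\End_R(M)}(\Hom_R(R,M)).
\]
To finish I would translate both sides. By Example \ref{unify}(1), $\tr_{R,N}(M) = \tr_N(M)$ as $\End_R(N)$-submodules of $N$, which handles the left-hand side. For the right-hand side, the canonical isomorphism $\Hom_R(R,M) \cong M$ of left $\End_R(M)$-modules induces a ring isomorphism $\End_{\End_R(M)}(\Hom_R(R,M)) \cong \End_{\End_R(M)}(M)$, and the latter equals $Z(\End_R(M))$ by the identification recorded in the preliminaries.

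The main obstacle, such as it is, is purely bookkeeping: one must check that the canonical identifications $\Hom_R(R,X) \cong X$ respect the bimodule structures in play, so that the induced map on endomorphism rings is a ring isomorphism rather than merely an abelian group isomorphism. This is immediate from the fact that the evaluation-at-$1$ map $\Hom_R(R,X) \to X$ is $\End_R(X)$-linear on both sides, so composition of endomorphisms transports faithfully. Beyond this routine verification, no new content is needed: the corollary really is just the $L = R$ incarnation of Theorem \ref{general}(2), isolated because it is the most frequently used special case.
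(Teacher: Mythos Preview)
Your proposal is correct and follows exactly the paper's approach: the paper's own proof is the single sentence ``The claim follows immediately from Theorem \ref{general} (2) with $L=R$,'' and you have simply unpacked the identifications (of $\pi^N_{R,M}$ with $\pi^N_M$, of $\tr_{R,N}(M)$ with $\tr_N(M)$, and of $\End_{\End_R(M)}(M)$ with $Z(\End_R(M))$) that make this immediate.
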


\begin{proof}

The claim follows immediately from Theorem \ref{general} (2) with $L=R$.

\end{proof}

Applying Corollary \ref{lessgeneral} we immediately recover the result of Lindo mentioned in the introduction.
\begin{theorem}[{\cite[Theorem 3.9] 
{Li17}}]\label{evenlessgeneral}

Suppose $M$ is faithful and reflexive. Then there is a ring isomorphism $\End_R(\tr_R(M)) \cong Z(\End_R(M))$.

\end{theorem}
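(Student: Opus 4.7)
The plan is to deduce this statement directly from Corollary \ref{lessgeneral} by specializing $N = R$, with the hypothesis translation handled by Proposition \ref{faithref}.

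First I would observe that under the identification $\End_R(R) \cong R$, the left $\End_R(R)$-module structure on $\tr_R(M) = \tr_{R,R}(M)$ collapses to the ordinary $R$-module structure, so that $\End_{\End_R(R)}(\tr_R(M))$ is literally $\End_R(\tr_R(M))$ as rings. Moreover, the notion "$M$ is reflexive with respect to $R$" unwinds, via the canonical isomorphism $\Hom_R(R,L) \cong L$ discussed in the Preliminaries, to ordinary reflexivity of $M$. Thus setting $N = R$ in Corollary \ref{lessgeneral} yields an isomorphism $\End_R(\tr_R(M)) \cong Z(\End_R(M))$ as soon as its hypothesis is met.

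Next I would verify that the local hypothesis of Corollary \ref{lessgeneral} is satisfied. With $N = R$, the condition reads: for every $\p \in \Supp_R(R) \cap \Ass_R(R) = \Ass_R(R)$, one of $R_\p$ or $R_\p$ lies in $\Add_{R_\p}(M_\p)$, i.e., $R_\p \in \Add_{R_\p}(M_\p)$ for all $\p \in \Ass_R(R)$. Since $M$ is reflexive it is in particular torsionless, so Proposition \ref{faithref} applies and tells us that this local condition is equivalent to the faithfulness of $M$, which we are given.

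Putting the two paragraphs together immediately produces the claimed ring isomorphism $\End_R(\tr_R(M)) \cong Z(\End_R(M))$. There is really no genuine obstacle here; the main point to be careful about is the translation of the $\End_R(N)$-module structures into ordinary $R$-module structures when $N = R$, and the invocation of Proposition \ref{faithref} to convert the hypothesis "$R_\p \in \Add_{R_\p}(M_\p)$ at associated primes" into the more classical phrase "faithful." No further computations are needed.
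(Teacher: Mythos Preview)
Your proposal is correct and follows essentially the same approach as the paper: invoke Proposition \ref{faithref} (using that reflexive implies torsionless) to obtain $R_{\p} \in \Add_{R_{\p}}(M_{\p})$ for all $\p \in \Ass_R(R)$, then apply Corollary \ref{lessgeneral} with $N=R$. Your additional remarks about the identifications $\End_R(R) \cong R$ and of $R$-reflexivity with ordinary reflexivity are routine unpackings that the paper leaves implicit.
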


\begin{proof}


It follows from Proposition \ref{faithref} that $R_{\p} \in \Add_{R_{\p}}(M_{\p})$ for all $\p \in \Ass(R)$. The claim thus follows from Corollary \ref{lessgeneral} with $N=R$.

\end{proof}

A significant special case of Theorem \ref{general} is the following which also serves as a variation on Theorem \ref{evenlessgeneral}.

\begin{cor}\label{canonical}

Suppose $R$ is Cohen-Macaulay with canonical module $\w$ and set $(-)^{\vee}=\Hom_R(-,\w)$. Suppose $M$ is reflexive with respect to $\w$, e.g. $M$ is maximal Cohen-Macaulay, and suppose for every $\p \in \Ass_R(R)$ that one of $M_{\p}$ or $M^{\vee}_{\p}$ is a generator. Then there is an isomorphism of rings $Z(\End_R(M)) \cong \End_R(\tr_{\w}(M))$. If moreover $R$ has dimension $1$, then we have $\tr_{\w}(M)$ is a canonical module for $Z(\End_R(M))$.

\end{cor}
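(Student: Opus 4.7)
The plan is to derive the first isomorphism directly from Corollary \ref{lessgeneral} applied with $N=\w$. Because $R$ is Cohen--Macaulay, $\w$ is faithful so $\Supp_R(\w)=\Spec(R)$, and the homothety $R\to \End_R(\w)$ is an isomorphism; thus that corollary would read $Z(\End_R(M))\cong \End_R(\tr_\w(M))$ once its hypothesis is verified, namely that for every $\p\in\Ass_R(R)$ one of $R_\p$ or $\w_\p$ lies in $\Add_{R_\p}(M_\p)$. Since every $\p\in\Ass_R(R)$ is minimal, $R_\p$ is Artinian local and $\w_\p$ is the injective hull of its residue field, so $\Hom_{R_\p}(-,\w_\p)$ is Matlis duality. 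Hence a splitting $R_\p\hookrightarrow M^\vee_\p$ dualizes to a splitting $\w_\p\hookrightarrow M^{\vee\vee}_\p$, and $\w$-reflexivity of $M$ identifies the target with $M_\p$. Consequently, the assumption that one of $M_\p$ or $M^\vee_\p$ is a generator translates to exactly what Corollary \ref{lessgeneral} requires.

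For the dimension-one claim, write $I:=\tr_\w(M)$ and $S:=Z(\End_R(M))\cong \End_R(I)$. The key preliminary observation is that $I_\p=\w_\p$ for every $\p\in\Ass_R(R)$: the analysis above places one of $R_\p$ or $\w_\p$ in $\Add_{R_\p}(M_\p)$, so by Proposition \ref{tracetheory}(2) the map $\phi^{M_\p}_{R_\p,\w_\p}$ is already an isomorphism, forcing $I_\p=\w_\p$. Hence $\w/I$ is supported only at $\m$ and has finite length, and $I$ is a torsion-free $R$-submodule of $\w$ containing a non-zerodivisor. Since $S$ acts faithfully on the torsion-free module $I$, $S$ is itself torsion-free over $R$, hence Cohen--Macaulay of dimension one; in particular $\Hom_R(S,\w)$ is the standard canonical module of the module-finite extension $S\supseteq R$.

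To identify $I$ with $\Hom_R(S,\w)$, I would consider the $S$-linear evaluation map
\[\Psi\colon I\to \Hom_R(S,\w),\qquad \Psi(x)(s)=s\cdot x.\]
At each $\p\in\Ass_R(R)$ we have $I_\p=\w_\p$ and $S_\p=\End_{R_\p}(\w_\p)=R_\p$, so $\Psi_\p$ reduces to the identity of $\w_\p$. Thus $\ker(\Psi)$ and $\coker(\Psi)$ have finite length. A depth chase then finishes the job: $I$ has positive depth as a submodule of $\w$, forcing $\ker(\Psi)=0$, and $\Hom_R(S,\w)$ has positive depth as the canonical module of the Cohen--Macaulay ring $S$, forcing $\coker(\Psi)=0$.

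The main obstacle I expect is this final identification via $\Psi$: the ``generic isomorphism plus depth'' strategy is conceptually clean, but executing it requires first establishing that $S$ is Cohen--Macaulay of dimension one---so that $\Hom_R(S,\w)$ is legitimately a canonical module of $S$---and then carrying out the depth bookkeeping on both kernel and cokernel. Once these pieces are in place, the identification of $I$ with a canonical module for $S$ essentially writes itself.
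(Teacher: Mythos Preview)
Your verification of the hypothesis of Corollary \ref{lessgeneral} is correct and essentially identical to the paper's: both dualize a splitting $R_\p \hookrightarrow M^\vee_\p$ to place $\w_\p$ in $\Add_{R_\p}(M_\p)$, and you simply spell out that this dualization is Matlis duality over the Artinian ring $R_\p$.

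For the dimension-one statement your route diverges from the paper's, and the depth argument for $\coker(\Psi)=0$ has a genuine gap. Positive depth of the target does \emph{not} force a generic isomorphism to be surjective: over a DVR $R$, the inclusion $tR \hookrightarrow R$ is an injection between depth-one modules that is an isomorphism at the unique minimal prime, yet its cokernel is $k \ne 0$. Your kernel argument is fine (a finite-length submodule of a positive-depth module vanishes), but no analogous statement handles the cokernel, and one cannot expect $\Psi$ to be surjective for an \emph{arbitrary} submodule $I \subseteq \w$ with $I_\p = \w_\p$ at associated primes.

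The paper avoids this by invoking Proposition \ref{tracetheory}(5): since every pair is covariantly $R$-reflexive, the inclusion $i:\tr_\w(M)\hookrightarrow \w$ induces an isomorphism $\End_R(I) \xrightarrow{\ \sim\ } \Hom_R(I,\w)$, i.e.\ $S \cong I^\vee$. Then $\Hom_R(S,\w) \cong I^{\vee\vee} \cong I$, the last step because $I$, being a nonzero submodule of $\w$ in dimension one, is maximal Cohen--Macaulay and hence $\w$-reflexive. In fact your map $\Psi$ \emph{is} an isomorphism: under the identification $S \cong I^\vee$ via $s \mapsto i\circ s$, your $\Psi(x)$ sends $g \in I^\vee$ to $g(x)$, so $\Psi$ is precisely the $\w$-biduality map for $I$. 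Thus your approach is salvageable, but only by importing Proposition \ref{tracetheory}(5); the generic-isomorphism-plus-depth strategy alone does not close the argument.
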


\begin{proof}
For the first claim, it suffices from Theorem \ref{general} to show, for every $\p \in \Ass_R(R)$, that one of $R_{\p}$ or $\w_{\p}$ is in $\Add_{R_{\p}}(M_{\p})$. But this follows immediately from noting that $R_{\p} \in \Add_{R_{\p}}(M_{\p})$ when $M_{\p}$ is a generator and that $w_{\p} \in \Add_{R_{\p}}(M_{\p})$ when $M^{\vee}_{\p}$ is a generator, since we may dualize a splitting $M^{\vee}_{\p} \to R_{\p}$. So we have $\End_R(\tr_{\w}(M)) \cong Z(\End_R(M))$. If $R$ has dimension  $1$, then we note $Z(\End_R(M))$ embeds in a direct sum of copies of $M$, so is maximal Cohen-Macualay. Note that $\End_R(\tr_{\w}(M)) \cong \Hom_R(\tr_{\w}(M),\w)$ from Proposition \ref{tracetheory} (5), since any pair $(M,N)$ is covariantly reflexive with respect to $R$. So from \cite[Theorem 3.3.7 (b)]{BH93}, we have that a canonical module for $Z(\End_R(M))$ is $\Hom_R(Z(\End_R(M)),\w) \cong \Hom_R(\End_R(\tr_{\w}(M),\w) \cong \Hom_R(\Hom_R(\tr_{\w}(M),w),\w) \cong \tr_{\w}(M)$, with the last isomorphism owed to the fact that $\tr_{\w}(M)$ embeds in $\w$, and is thus maximal Cohen-Macaulay when $R$ has dimension $1$.    
\end{proof}

\begin{remark}

The condition that one of $M_{\p}$ or $M^{\vee}_{\p}$ be a generator for every $\p \in \Ass_R(R)$ is required since a module that is reflexive with respect to $\w$ need not be torsionless without additional hypotheses, unlike reflexivity with respect to $R$. This condition is quite mild and can be guaranteed by assuming, for instance, that $M$ is torsionless locally at every associated prime of $R$. This holds, for example, if $R$ is generically Gorenstein. 
    
\end{remark}

Our work provides some perspective on the famous Huneke-Wiegand conjecture. We recall the Huneke-Wiegand conjecture may be stated in one of its more general forms as follows:

\begin{conjecturbe}\label{hwc}
Suppose $R$ is a Cohen-Macaulay local ring of dimension $1$. If $M$ is a torsion-free $R$-module with rank such that $M^{*} \otimes_R M$ is torsion-free, then $M$ is free.
\end{conjecturbe}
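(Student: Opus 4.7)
Since this is the celebrated Huneke--Wiegand conjecture, any proposal can only be exploratory. The plan is to leverage the paper's identification $Z(\End_R(M)) \cong \End_R(\tr_R(M))$ (Theorem \ref{evenlessgeneral}) together with the canonical-module variant in Corollary \ref{canonical}. Under the hypotheses, $M$ is torsion-free with rank over a $1$-dimensional Cohen--Macaulay ring, so $M$ is maximal Cohen--Macaulay and faithful; after completing and adjoining a canonical module $\w$, and modulo the mild generic-Gorenstein hypothesis needed in Corollary \ref{canonical}, both these results apply and identify $\tr_\w(M)$ with the canonical module of $Z(\End_R(M))$.

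First I would reduce to the case that $M$ has no free direct summand: if $M = R \oplus M'$, then $(M')^{*} \otimes_R M'$ appears as a direct summand of $M^{*} \otimes_R M$, hence remains torsion-free, and one concludes by induction on rank. Next I would analyze the natural factorization
\[
M^{*} \otimes_R M \twoheadrightarrow M^{*} \otimes_{\End_R(M)} M \xrightarrow{\phi^M_R} \tr_R(M) \hookrightarrow R.
\]
The torsion-freeness hypothesis is equivalent to $\Tor^R_1(M,M^{*}) = 0$, and the aim would be to translate this into sharp control on the kernels in the above factorization---ideally forcing both maps to be injective and $\tr_R(M)$ to equal $R$ in a way that exposes a free summand of $M$. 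One would then seek to combine the ring isomorphism $Z(\End_R(M)) \cong \End_R(\tr_R(M))$ with the maximal Cohen--Macaulay structure on these rings, and the canonical-module interpretation of $\tr_\w(M)$ from Corollary \ref{canonical}, to rigidify $\End_R(M)$ enough to deduce $M$ is free.

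The hard part, and indeed the essential obstacle, is that the conjecture remains open: although $\Tor^R_1(M,M^{*}) = 0$ gives genuine rigidity, no known mechanism rules out a nonzero torsion-free kernel of the surjection $M^{*} \otimes_R M \twoheadrightarrow M^{*} \otimes_{\End_R(M)} M$ when $M$ has rank $r > 1$, since generically this kernel has rank $r^2 - 1$. Any successful approach through the paper's framework would therefore require an additional rigidity input---a statement asserting, for appropriate $L$ and $N$, that the kernel of $\phi^M_{L,N}$ cannot be torsion-free under the given hypotheses. The forthcoming Conjecture \ref{hwcalt} and Proposition \ref{equiv} referenced in the introduction appear to be reformulations of exactly this character, which may isolate the remaining difficulty in trace-theoretic language but, absent the missing rigidity result, do not seem to close it.
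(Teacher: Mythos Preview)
There is no proof in the paper to compare against: the statement labeled \ref{hwc} is presented as an open conjecture, not a theorem, and the paper makes no attempt to prove it. You correctly recognize this at the outset. The paper's only contribution concerning Conjecture~\ref{hwc} is Proposition~\ref{equiv}, which shows it is equivalent to the variant Conjecture~\ref{hwcalt} via the isomorphism $M^{\vee} \otimes_R \Hom_R(\w,M) \cong M^{\vee} \otimes_R (M^{\vee})^{*}$ obtained from $\w$-reflexivity and Hom--tensor adjointness.

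Your exploratory remarks are reasonable in spirit but go well beyond anything the paper establishes. In particular, the paper does not claim that its trace-theoretic machinery (Theorem~\ref{evenlessgeneral}, Corollary~\ref{canonical}) yields any new rigidity toward the conjecture; it only uses the framework to argue that the formulation of Conjecture~\ref{hwc} is natural even outside the Gorenstein setting, by exhibiting the equivalent dual formulation. So while your diagnosis of the obstacle---the lack of control on the kernel of $M^{*} \otimes_R M \twoheadrightarrow M^{*} \otimes_{\End_R(M)} M$---is sensible, the paper neither attempts this analysis nor claims progress on it.
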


The assumption that $M$ be torsion-free is not really needed, as one may reduce to this case, but it is convenient technically. Conjecture \ref{hwc} is generally considered more mysterious, and less is known, if we do not assume $R$ to be Gorenstein. Indeed, if $R$ is Gorenstein then $R$ is its own canonical module and so its duality properties are better behaved. It has thus not been clear whether the formulation of Conjecture \ref{hwc} is suitable for this level of generality, and there have been some attempts at providing alternate variations. For instance, a version was considered by\cite{GT15} for ideals where the hypothesis that $\Hom_R(I,R) \otimes_R I$ is torsion-free is replaced by the hypothesis that $\Hom_R(I,\w) \otimes_R I$ is torsion-free, and the conclusion that $I$ is free is replaced by the conclusion that either $I$ is free or $I \cong \w$. A counterexample was provided in \cite[Example 7.3]{GT15}, however, that shows this conjecture need not hold in general, even for a reasonably well-behaved numerical semigroup ring. Our purpose for the remainder of this section is to provide some indication that the statement of Conjecture \ref{hwc} is natural even outside the Gorenstein setting. To this end, we pose the following:

\begin{conjecturbe}\label{hwcalt}
Suppose $R$ is a Cohen-Macaulay local ring of dimension $1$. If $M$ is a torsion-free $R$-module with rank such that $M^{\vee} \otimes_R \Hom_R(\w,M)$ is torsion-free, then $M \cong \w^{\oplus n}$ for some $n$.
    
\end{conjecturbe}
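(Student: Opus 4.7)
The plan is to apply the Bass-class correspondence to convert the hypothesis on $M$ into a statement about the companion module $N := \Hom_R(\w, M)$, extract a Tor-vanishing statement via a one-dimensional depth chase, and then leverage Corollary \ref{lessgeneral} to split a free summand off of $N$.

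Since $M$ is torsion-free with rank over a one-dimensional CM local ring, $M$ is maximal Cohen-Macaulay and therefore lies in the Bass class of $\w$. Consequently the natural evaluation map $\w \otimes_R N \to M$ is an isomorphism, and Hom-tensor adjunction together with $\End_R(\w) = R$ gives $M^\vee = \Hom_R(\w \otimes_R N, \w) \cong \Hom_R(N, R) = N^*$. The torsion-freeness hypothesis on $M^\vee \otimes_R \Hom_R(\w, M)$ thus translates into torsion-freeness of $N^* \otimes_R N$. Because $R$ is one-dimensional CM and both $N$ and $N^*$ are MCM, a classical depth chase (in the spirit of Auslander's results on torsion-freeness of tensor products) yields $\Tor_1^R(N^*, N) = 0$, which is the key cohomological input.

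Next, the module $N$ is reflexive (being MCM) and faithful (since $M$ is, because $M$ has positive rank), so Proposition \ref{faithref} ensures that the hypothesis of Corollary \ref{lessgeneral}, applied with $N$ in the role of $M$ and $R$ in the role of $N$, is satisfied; this gives a ring isomorphism $Z(\End_R(N)) \cong \End_R(\tr_R(N))$. The intended use of the Tor-vanishing is to show that the natural composition $N^* \otimes_R N \twoheadrightarrow N^* \otimes_{\End_R(N)} N \to \End_R(N)$ is well-behaved enough to force $\tr_R(N) = R$; equivalently, $R \in \Add_R(N)$. Combined with reflexivity and the Krull-Schmidt property for MCM modules, this would split a copy of $R$ directly off $N$. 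The property $\Tor_1^R(N^*, N) = 0$ is inherited by the summands of any such splitting (since $\Tor$ respects direct sums), so a routine induction on the number of indecomposable summands of $N$ gives $N \cong R^{\oplus n}$ for some $n$, and the Bass-class isomorphism then returns $M \cong \w \otimes_R R^{\oplus n} = \w^{\oplus n}$, completing the proof.

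The main obstacle is precisely the step of extracting the splitting $R \hookrightarrow N$ from the Tor-vanishing $\Tor_1^R(N^*, N) = 0$. The trace framework is attractive here because Corollary \ref{lessgeneral} presents $\tr_R(N)$ as the canonical module of $Z(\End_R(N))$, providing concrete structural data about the putative free summand; one hopes to show that the Tor-vanishing forces the trace ideal to coincide with $R$ by analyzing how the center embeds into $\End_R(N)$. Nevertheless, completing this implication is genuinely deep, since in the Gorenstein case ($\w = R$) the conjecture specializes to the classical Huneke-Wiegand conjecture. Carrying the splitting step through in full generality therefore appears to demand either auxiliary structural hypotheses on $R$ --- such as analytic unramifiedness, minimal multiplicity, or finite Cohen-Macaulay type --- or a fundamentally new rigidity argument that exploits the non-Gorenstein feature $\w \not\cong R$ in a way the present trace theory alone does not supply.
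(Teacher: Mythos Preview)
The statement you are attempting to prove is explicitly labeled as a \emph{conjecture} in the paper, not a theorem; the paper does not prove it, and indeed cannot, since Proposition~\ref{equiv} shows it is equivalent to the open Huneke--Wiegand conjecture (Conjecture~\ref{hwc}). Your reduction via the Bass-class correspondence --- identifying $M^\vee \otimes_R \Hom_R(\w,M)$ with $N^* \otimes_R N$ for $N=\Hom_R(\w,M)$, and noting that $M\cong\w^{\oplus n}$ iff $N$ is free --- is essentially the content of the paper's proof of Proposition~\ref{equiv} (the paper phrases it dually, taking $M^\vee$ as the companion module and identifying $\Hom_R(\w,M)$ with $(M^\vee)^*$). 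So your first two steps correctly recover exactly the equivalence the paper establishes, and nothing more is claimed there.

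Where your proposal departs from the paper is in attempting to go further and actually prove the conjecture. There are two genuine gaps. First, the claimed ``depth chase'' extracting $\Tor_1^R(N^*,N)=0$ from torsion-freeness of $N^*\otimes_R N$ is not valid over an arbitrary one-dimensional Cohen--Macaulay local ring: Auslander-type rigidity arguments require one factor to have finite projective dimension, and without that hypothesis torsion-freeness of the tensor product does not force $\Tor_1$ to vanish. Second --- and you acknowledge this yourself in the final paragraph --- the passage from any such Tor-vanishing to a free summand of $N$ is precisely the hard core of Huneke--Wiegand, and neither Corollary~\ref{lessgeneral} nor the surrounding trace machinery supplies it. In short, your proposal is not a proof but a (correct) reduction to an open problem, and that reduction coincides with what the paper actually proves in Proposition~\ref{equiv}.
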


The naturality of this conjecture is evidenced by the existence of the map $M^{\vee} \otimes_R \Hom_R(\w,M) \to \Hom_R(\w,\w) \cong R$ which factors through the map $\phi^M_{\w,\w}$ introduced above, and whose image $\tr^M_{\w,\w}$ may be identified with an ideal of $R$ that serves as a type of twisted trace ideal for $M$. In the case where $M$ is an ideal of $R$, the hypothesis that $M^{\vee} \otimes_R \Hom_R(\w,M)$ be torsion-free will force $M^{\vee} \otimes_{\End_R(M)} \Hom_R(\w,M)$ to be torsion-free which in turn will force $\phi^M_{\w,\w}$ to be injective, since it is so at every $\p \in \Ass_R(R)$ from Proposition \ref{tracetheory} $(2)$ and $(6)$. 
Our claim that Conjecture \ref{hwc} is natural comes then from the following observation:

\begin{prop}\label{equiv}
Conjectures \ref{hwc} and \ref{hwcalt} are equivalent.
\end{prop}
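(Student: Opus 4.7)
My plan is to use $(-)^{\vee}=\Hom_R(-,\w)$ as an involutive duality on the category of MCM $R$-modules --- which in dimension one coincides with the full subcategory of torsion-free modules --- to swap the hypothesis and conclusion of each conjecture with those of the other. The translation will be driven by a single natural isomorphism obtained by combining $R\cong\Hom_R(\w,\w)$ with Hom-tensor adjunction: for any $R$-module $M$,
\[\Hom_R(M,R)\;\cong\;\Hom_R\!\big(M\otimes_R\w,\,\w\big)\;\cong\;\Hom_R(\w,\,M^{\vee}).\]
When $M$ is MCM (so $M\cong M^{\vee\vee}$), applying this isomorphism with $M^{\vee}$ in place of $M$ yields the companion identity $\Hom_R(M^{\vee},R)\cong\Hom_R(\w,M)$.

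For Conjecture \ref{hwc} $\Rightarrow$ Conjecture \ref{hwcalt}, given $M$ satisfying the hypothesis of Conjecture \ref{hwcalt}, I set $N:=M^{\vee}$; then $N$ is torsion-free, and the companion identity rewrites $N^{*}\otimes_{R}N$ as $\Hom_R(\w,M)\otimes_R M^{\vee}$, which is torsion-free by assumption. Conjecture \ref{hwc} then forces $N\cong R^{\oplus n}$, and dualizing gives $M\cong N^{\vee}\cong\w^{\oplus n}$. The reverse direction is symmetric: starting from $M$ as in Conjecture \ref{hwc}, set $N:=M^{\vee}$ again; the original identity rewrites $\Hom_R(\w,N)$ as $M^{*}$, so $N^{\vee}\otimes_R\Hom_R(\w,N)\cong M\otimes_R M^{*}$ is torsion-free, Conjecture \ref{hwcalt} gives $N\cong\w^{\oplus n}$, and dualizing produces $M\cong R^{\oplus n}$.

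The one step that is not purely formal is verifying that $N=M^{\vee}$ inherits the ``has rank'' hypothesis from $M$: locally at $\p\in\Ass_R(R)$, $M_\p\cong R_\p^{\oplus r}$ gives $N_\p\cong\w_\p^{\oplus r}$, so $N$ has rank precisely when $\w_\p\cong R_\p$ at every associated prime of $R$, i.e., when $R$ is generically Gorenstein. This is the implicit ambient setting of both conjectures --- without it, $\w^{\oplus n}$ itself fails to have a rank, making the conclusion of Conjecture \ref{hwcalt} incompatible with its hypothesis in any nontrivial case --- so the argument will record this as a standing assumption rather than try to dispense with it. Beyond this rank bookkeeping, the proof is just symbol-pushing via the two adjunction identities above.
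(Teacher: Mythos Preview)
Your proof is correct and follows essentially the same route as the paper: both arguments hinge on the single identity $\Hom_R(\omega,M)\cong (M^{\vee})^{*}$, obtained from Hom--tensor adjunction together with $\End_R(\omega)\cong R$, and then translate the hypothesis and conclusion of one conjecture into those of the other by passing to $N=M^{\vee}$. You are in fact more careful than the paper in flagging that $M^{\vee}$ inherits a rank only when $R$ is generically Gorenstein---a point the paper's own proof passes over in silence.
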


\begin{proof}

Since $M$ is torsion-free with $\dim R=1$, it is maximal Cohen-Macaulay, so $M$ is reflexive with respect to $\w$. Then, by Hom-tensor adjointness, we observe
\[M^{\vee} \otimes_R \Hom_R(\w,M) \cong M^{\vee} \otimes_R \Hom_R(\w,M^{\vee \vee}) \cong M^{\vee} \otimes_R \Hom_R(\w \otimes_R M^{\vee},\w) \cong M^{\vee} \otimes_R (M^{\vee})^{*}.\]
Thus, the requirement that $M^{\vee} \otimes_R \Hom_R(\w,M)$ be torsion-free is equivalent to the hypotheses that $(M^{\vee})^{*} \otimes_R M^{\vee}$ is torsion-free, and $M \cong \w^{\oplus n}$ for some $n$ if and only if $M^{\vee}$ is free. So the two conjectures are equivalent, as desired.  
\end{proof}

\bibliographystyle{amsalpha}
\bibliography{mybib}

\vspace{.5cm}

\end{document}